\theoremstyle{plain}
\newtheorem{theorem}{Theorem}[section]
\newtheorem{lemma}[theorem]{Lemma}
\newtheorem{corollary}[theorem]{Corollary}
\newtheorem{proposition}[theorem]{Proposition}
\theoremstyle{definition}
\newtheorem{definition}{Definition}
\theoremstyle{remark}
\newtheorem*{notation}{Notation}
\providecommand{\real}{\mathbb{R}}
\providecommand{\convhull}[1]{\mathop{\rm conv}\nolimits #1}
\providecommand{\set}[2]{\left\{#1\, |\, #2\right\}}
\providecommand{\separg}[3]{\mathcal{H}_{#1 \mid #2}(#3)}
\providecommand{\notseparg}[3]{\mathcal{H}_{#1 \sim #2}(#3)}
\providecommand{\sep}[2]{\mathcal{C}_{#1 \mid #2}}
\title[On Partitioning Colored Points]{On Partitioning Colored Points}
\author{Takahisa Toda}
\address{Graduate School of Human and Environmental Studies, Kyoto University\\
  Yoshida-nihonmatsu-cho, Sakyo-ku, Kyoto 606-8501, JAPAN}
\email{toda.takahisa@hw3.ecs.kyoto-u.ac.jp}
\date{\today}
\begin{document}
\maketitle

\begin{abstract}
P.~Kirchberger proved that, for a finite subset $X$ of $\real^{d}$ such that each point in $X$ is painted with one of two colors, if every $d+2$ or fewer points in $X$ can be separated along the colors, then all the points in $X$ can be separated along the colors.
In this paper, we show a more colorful theorem.
\end{abstract}

\section{Introduction}\label{sec:intro}
Let us imagine that red candles and blue candles are placed on the top of a cake.
We want to cut it with a knife into two pieces in such a way that all the red candles are on one of the two pieces and all the blue candles are on the other piece.
A question is when we can cut it successfully.
The following Kirchberger's theorem~\cite{kirch:kirchberger} answers this question in a general setting.
Let $X$ be a finite subset of $\real^{d}$ consisting of red points and blue points.
We say that a subset $S$ of $X$ can be {\em separated along the colors} if there is a hyperplane $h$ such that all the red points in $S$ are contained in $h^{+}$ and all the blue points in $S$ are contained in $h^{-}$.
Here $h^{+}$ and $h^{-}$ are the two open halfspaces associated with $h$.
Kirchberger's theorem states that if every $d+2$ or fewer points in $X$ can be separated along the colors,
then all the points in $X$ can be separated along the colors.

We consider a more colorful cake cutting problem.
Suppose that we have a cake with candles each of which is painted with one of $k$ colors.
We want to cut it with a knife by several times in such a way that all candles with the same color are on one of the pieces, although candles with different colors must not be on the same piece.
Let us formally describe this setting.
Let $X$ be a finite subset of $\real^{d}$, and suppose that each point in $X$ is painted with one of $k$ colors.
We say that a subset $S$ of $X$ can be {\em partitioned by hyperplanes along the colors} if 
there is a family $\mathcal{F}$ of hyperplanes satisfying the following three conditions:
\begin{enumerate}
\item every hyperplane in $\mathcal{F}$ avoids the points in $S$;
\item for any two points in $S$ with different colors, there is a hyperplane in $\mathcal{F}$ separating them;
\item no hyperplane in $\mathcal{F}$ separates points in $S$ with the same color.
\end{enumerate}
Clearly this notion for $2$ colors is identical to the notion of separations.

\noindent
{\bf Our Contribution:} We prove the following theorem.
If every $(d+1)\cdot\eta_{d}(k)+k$ or fewer points in $X$ can be partitioned by hyperplanes along the colors,
then all the points in $X$ can be partitioned by hyperplanes along the colors, where $\eta_{d}(k)$ is given by
\[
\eta_{d}(k)=\sum_{i=0}^{d}\binom{k-2}{i}\; .
\]

\noindent
{\bf Related Work:} In~\cite{arocha:colorful}~\cite{attila:colorful}, they studied other colorful Kirchberger theorems.
They introduced the notion of separations for $k$ colors as follows.
Let $A_{i}$ be a finite set of points painted with the $i$-th color.
They say that $\bigcup_{i=1}^{k}A_{i}$ is separated if $\bigcap_{i=1}^{k}\convhull{A_{i}}=\emptyset$, where $\convhull{A_{i}}$ denotes the convex hull of $A_{i}$.
By the separation theorem, this notion for $2$ colors is identical to the notion of separations introduced by hyperplanes.
Their theorems are based on this notion.
We remark that our notion of partitions is essentially different from theirs.

In Section~\ref{sec:basic}, we introduce basic notions.
In Section~\ref{sec:arrangement}, we calculate the maximum cardinality of minimal transversals for the full subdivisions of $\mathcal{H}(X)$, which is essential to our main theorem.
In Section~\ref{sec:theorem}, we prove the main theorem.
In Section~\ref{sec:conclusion}, we put a conclusion.

\section{Basic Notions}\label{sec:basic}
Let $X$ be a nonempty set, and let $P$ be a collection of subsets of $X$.
The collection $P$ is called a {\em partition} of $X$ if it has the following three properties:
\begin{enumerate}
\item each member of $P$ is nonempty;
\item any two distinct members of $P$ are disjoint;
\item the union of all the members of $P$ is $X$.
\end{enumerate}
The set $X$ is called the {\em support} of this partition $P$.
Each member of $P$ is called a {\em component} of $P$.
In this paper we will only deal with partitions of a finite set.
Let $Y$ be a nonempty subset of $X$.
For a partition $P$ of $X$, we write $P|_{Y}$ for the {\em restriction} of $P$ to $Y$, i.e.
\[
P|_{Y} := \set{U\cap Y}{\text{$U$ is a component of $P$}}\setminus\{\emptyset\}.
\]
We say that the pair $(X, \mathcal{C})$ of a finite set $X$ and a collection of partitions of $X$ is a {\em division}.
By abuse of notation, a collection of partitions having a common support, say $\mathcal{C}$ itself, is called a division.
A subset of $\mathcal{C}$ is called a {\em subdivision} of $\mathcal{C}$.
Two elements $a$ and $b$ in $X$ are {\em separated} by a partition $P$ if 
$a$ and $b$ lie in different components of $P$.
A division $(X,\mathcal{C})$ is {\em full} if every two distinct elements in $X$ can be separated by some partition in $\mathcal{C}$.
In particular, a subset $\mathcal{S}$ of $\mathcal{C}$ is a {\em full subdivision} of $\mathcal{C}$ if $(X,\mathcal{S})$ is full.

We introduce a key concept in this paper.

\begin{definition}
Let $(X,\mathcal{C})$ be a full division.
A subset of $\mathcal{C}$ is called a {\em transversal} for the full subdivisions of $\mathcal{C}$ if 
it intersects all the full subdivisions of $\mathcal{C}$, that is, it contains at least one member from each full subdivision of $\mathcal{C}$.
\end{definition}

\begin{proposition}\label{prop:trans}
Let $(X,\mathcal{C})$ be a full division.
A minimal transversal for the full subdivisions of $\mathcal{C}$ has the form $\sep{a}{b}$ for some pair $(a,b)$ of elements in $X$, where $\sep{a}{b}$ denotes the set of all members of $\mathcal{C}$ that separate two elements $a$ and $b$ in $X$.
\end{proposition}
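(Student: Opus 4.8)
The plan is to recognize the statement as an instance of blocker (transversal) duality for set systems and to prove the stronger characterization that a subset $T\subseteq\mathcal{C}$ is a transversal for the full subdivisions \emph{if and only if} $\sep{a}{b}\subseteq T$ for some pair $(a,b)$ of distinct elements of $X$; the proposition will then follow by minimality. The starting observation I would make explicit is that a subdivision $\mathcal{S}$ is full precisely when it meets every set $\sep{a}{b}$: by definition $\mathcal{S}$ is full iff each pair $a\neq b$ is separated by some member of $\mathcal{S}$, and such a member is exactly an element of $\mathcal{S}\cap\sep{a}{b}$. Thus the full subdivisions are nothing but the hitting sets of the family $\{\sep{a}{b}:a\neq b\}$, and being a transversal for them means meeting every such hitting set.

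For the easy direction I would check that each $\sep{a}{b}$ (with $a\neq b$) is itself a transversal: if $\mathcal{S}$ is any full subdivision, then fullness yields some $P\in\mathcal{S}$ separating $a$ and $b$, i.e. $P\in\mathcal{S}\cap\sep{a}{b}$, so $\mathcal{S}$ meets $\sep{a}{b}$. Consequently every superset of some $\sep{a}{b}$ is also a transversal. Note that $\sep{a}{b}\neq\emptyset$ since $(X,\mathcal{C})$ is full, so these transversals are nonempty, and in particular the empty set is not a transversal (it misses the full subdivision $\mathcal{C}$ itself).

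The heart of the argument is the converse, which I would carry out by contradiction. Suppose $T$ is a transversal containing no $\sep{a}{b}$. Then for every pair $a\neq b$ there is a partition $P_{ab}\in\sep{a}{b}\setminus T$. The complement $\mathcal{C}\setminus T$ therefore contains $P_{ab}$ for every pair, hence separates every pair, and so is a full subdivision; but $\mathcal{C}\setminus T$ is disjoint from $T$, contradicting that $T$ meets every full subdivision. Hence any transversal $T$ satisfies $\sep{a}{b}\subseteq T$ for some pair, and if $T$ is moreover \emph{minimal}, then since $\sep{a}{b}$ is itself a transversal contained in $T$, minimality forces $T=\sep{a}{b}$. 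I expect the only delicate point to be this last step: spotting that the right object to contradict transversality is the complement $\mathcal{C}\setminus T$ and confirming that it is genuinely full; everything else is bookkeeping with the definitions of ``full'' and ``separate''.
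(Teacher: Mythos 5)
Your proof is correct and follows essentially the same route as the paper's: you show by contradiction (using that $\mathcal{C}\setminus T$ would be a full subdivision disjoint from $T$) that every transversal contains some $\sep{a}{b}$, observe that each $\sep{a}{b}$ is itself a transversal, and conclude by minimality. The blocker-duality framing is a nice gloss but the underlying argument is identical to the paper's.
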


\begin{proof}
Let $\mathcal{T}$ be a minimal transversal for the full subdivisions of $\mathcal{C}$.
We prove that $\sep{a}{b}\subseteq\mathcal{T}$ for some pair $(a,b)$ of distinct elements in $X$.
Assume the opposite.
From this assumption, for each pair $(a,b)$ of distinct elements in $X$, there is a member of $\sep{a}{b}$ which does not belong to $\mathcal{T}$, which implies that every two distinct elements in $X$ can be separated by some member of $\mathcal{C}\setminus\mathcal{T}$.
Thus $\mathcal{C}\setminus\mathcal{T}$ is full and has no common member to $\mathcal{T}$.
This contradicts that $\mathcal{T}$ is a transversal.
Therefore $\mathcal{T}$ contains some $\sep{a}{b}$.

We observe that $\sep{a}{b}$ is a transversal for the full subdivisions of $\mathcal{C}$.
Indeed, by definition, for each full subdivision $\mathcal{F}$ of $\mathcal{C}$ there is a member of $\mathcal{F}$ separating $a$ and $b$.
This implies that $\mathcal{F}$ has a common member to $\sep{a}{b}$.

From the minimality of $\mathcal{T}$, we obtain that $\mathcal{T}=\sep{a}{b}$.
\end{proof}

In the remaining part of this section, we make a supplementary remark on a related concept to divisions.
A {\em separoid}~\cite{arocha:separoid} is a set together with a binary relation on its subsets, denoted by $\mid\;$, satisfying the following axioms:
\begin{enumerate}
\item $S\mid T\Rightarrow T\mid S$;
\item $S\mid T\Rightarrow S\cap T=\emptyset$;
\item $S\mid T$ and $U\subseteq S\Rightarrow U\mid T$.
\end{enumerate}
Given a division $(X,\mathcal{C})$, it induces a separoid over $X$ by declaring that $S\mid T$ if there is a member $P$ of $\mathcal{C}$ such that $S$ and $T$ are contained in different components of $P$.
Clearly $(X,\mathcal{C})$ can be recovered from the separoid provided that every member of $\mathcal{C}$ consists of at most two components.
In this paper, we shall be concerned with $\mathcal{C}$ itself rather than the separation relation $\mid$ induced by $\mathcal{C}$.

\section{Arrangement of points in $\real^{d}$}\label{sec:arrangement}
In this section, we will deal with a certain collection $\mathcal{H}(X)$ of partitions induced by hyperplanes in $\real^{d}$ and calculate the maximum cardinality of minimal transversals for the full subdivisions of $\mathcal{H}(X)$.

Let $X$ be a finite subset of $\real^{d}$.
We say that a partition $P$ of $X$ can be {\em realized by a hyperplane} if $P=\{X\}$ or there is a hyperplane $h$ such that $P=\{X\cap h^{+}, X\cap h^{-}\}$, where $h^{+}$ and $h^{-}$ are the two open halfspaces associated with $h$.

\begin{notation}
We denote by $\mathcal{H}(X)$ the set of all partitions of $X$ that can be realized by hyperplanes and by $\separg{a}{b}{X}$ the set of all members of $\mathcal{H}(X)$ that separate two elements $a$ and $b$ in $X$.
We define $\notseparg{a}{b}{X}:=\mathcal{H}(X)\setminus\separg{a}{b}{X}$, which is the set of all members of $\mathcal{H}(X)$ that do not separate $a$ and $b$.
\end{notation}

The following proposition is well-known (see \cite{harding:partitions}).

\begin{proposition}
Let $X$ be a set of $k$ points in $\real^{d}$.
The cardinality of $\mathcal{H}(X)$ is at most
\begin{equation*}
  \phi_{d}(k):=\sum_{i=0}^{d}\binom{k-1}{i}\; .
\end{equation*}
In particular, if the points are in general position, then the cardinality is exactly $\phi_{d}(k)$.
\end{proposition}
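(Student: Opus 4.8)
The plan is to dualize and reduce the statement to the classical count of regions of a central hyperplane arrangement. First I would lift the data to homogeneous coordinates: to each point $x\in X$ associate $\hat{x}=(x,1)\in\real^{d+1}$, and to an affine hyperplane $h=\set{y}{\langle a,y\rangle=b}$ in $\real^{d}$ associate the vector $w=(a,-b)\in\real^{d+1}$, so that $y\in h^{+}$ exactly when $\langle w,\hat{y}\rangle>0$. Under this correspondence a partition of $X$ realized by a hyperplane is recorded by the sign pattern $\bigl(\operatorname{sign}\langle w,\hat{x}\rangle\bigr)_{x\in X}$ of a vector $w$ that avoids every dual hyperplane $H_{x}:=\set{w}{\langle w,\hat{x}\rangle=0}$. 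Note that the $H_{x}$ all pass through the origin of $\real^{d+1}$, that the all-$+$ and all-$-$ patterns both describe the trivial partition $\{X\}$, and that replacing $w$ by $-w$ interchanges $h^{+}$ and $h^{-}$.

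Next I would make the correspondence exact. The vectors $w$ realizing a fixed strict sign pattern are precisely the open full-dimensional regions of the central arrangement $\mathcal{A}=\set{H_{x}}{x\in X}$ in $\real^{d+1}$; distinct regions carry distinct patterns, and since $\mathcal{A}$ is central the antipodal map $w\mapsto -w$ sends regions to regions while flipping every sign. Sending a region to the unordered partition $\{X\cap h^{+},\,X\cap h^{-}\}$ therefore identifies each antipodal pair of regions with exactly one member of $\mathcal{H}(X)$, and this yields a bijection between antipodal pairs of regions and $\mathcal{H}(X)$ (the pair of all-same-sign regions corresponding to $\{X\}$). Consequently $|\mathcal{H}(X)|$ is exactly half the number of regions of $\mathcal{A}$.

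Finally I would invoke the standard region count. An arrangement of $k$ hyperplanes through the origin of $\real^{n}$ has at most $2\sum_{i=0}^{n-1}\binom{k-1}{i}$ regions, with equality in general position; this is the classical count, which one obtains from the recurrence $r_{n}(k)=r_{n}(k-1)+r_{n-1}(k-1)$ together with $r_{n}(1)=r_{1}(k)=2$, by observing that a newly added central hyperplane subdivides exactly the regions it crosses and that those regions are counted by the induced central arrangement of $k-1$ hyperplanes inside it. Taking $n=d+1$ gives at most $2\phi_{d}(k)$ regions, hence $|\mathcal{H}(X)|\le\phi_{d}(k)$. When the points of $X$ are in general position the lifted vectors $\hat{x}$ are in general position in $\real^{d+1}$, the arrangement $\mathcal{A}$ is in general position, the bound is attained, and $|\mathcal{H}(X)|=\phi_{d}(k)$.

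The main point to handle carefully is the bookkeeping at the two extremes rather than the region count itself. One must check that the degenerate parameters with $a=0$ introduce no new partition (they only ever realize $\{X\}$, already accounted for by the all-same-sign regions), and that the factor two coming from the antipodal symmetry is applied uniformly, so that the trivial partition is counted once just like the nontrivial ones. Once the correspondence and the transfer of general position to the dual arrangement are verified, the inequality and the equality follow immediately from the recurrence.
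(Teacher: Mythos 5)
Your proof is correct, but note that the paper offers no proof of this proposition at all: it is stated as well known and attributed to Harding's 1967 paper, so there is nothing internal to compare against. What you have written is essentially the classical argument (and in substance the one in the cited reference): lift to homogeneous coordinates, pass to the dual central arrangement $\set{H_{x}}{x\in X}$ in $\real^{d+1}$, identify members of $\mathcal{H}(X)$ with antipodal pairs of regions, and invoke the region-count recurrence $r_{n}(k)=r_{n}(k-1)+r_{n-1}(k-1)$, which gives an inequality in general and equality when every $d+1$ of the lifted vectors are linearly independent, i.e.\ when the points are affinely in general position. The bookkeeping you flag is handled correctly: the degenerate parameters $a=0$ land in the two all-same-sign regions, which form the single antipodal pair corresponding to the trivial partition $\{X\}$, so the factor of two is uniform and $|\mathcal{H}(X)|$ is exactly half the number of regions. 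The one step worth spelling out if you wrote this in full is that two non-antipodal regions cannot yield the same unordered partition (an unordered two-block partition determines the sign vector up to a global flip), which is what makes the two-to-one correspondence exact rather than merely surjective; with that said, your argument is complete and supplies a proof the paper leaves to the literature.
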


Recall that every minimal transversal for the full subdivisions of $\mathcal{H}(X)$ has the form $\separg{a}{b}{X}$ for some pair $(a,b)$.
We remark that $\separg{a}{b}{X}$ is a transversal but need not be minimal.
Suppose that three distinct points $a,b,$ and $c$ in $X$ are collinear and that $c$ lies between $a$ and $b$.
Then every member of $\mathcal{H}(X)$ separating $a$ and $c$ also separates $a$ and $b$, while every member of $\mathcal{H}(X)$ separating $c$ and $b$ does not separate $a$ and $c$.
Thus $\separg{a}{c}{X}$ is a proper subset of $\separg{a}{b}{X}$, which implies that $\separg{a}{b}{X}$ is not a minimal transversal.

\begin{proposition}
Let $X$ be a set of points in $\real^{d}$.
If no three points in $X$ are collinear, then every $\separg{a}{b}{X}$ for two distinct elements $a$ and $b$ in $X$ is a minimal transversal for the full subdivisions of $\mathcal{H}(X)$.
\end{proposition}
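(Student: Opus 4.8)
The plan is to invoke the structural result of Proposition~\ref{prop:trans}. Since $(X,\mathcal{H}(X))$ is a full division (any two distinct points of $\real^{d}$ are separated by a hyperplane, and a generic such hyperplane avoids the finite set $X$) and $\separg{a}{b}{X}$ is already known to be a transversal, it contains some minimal transversal, which by Proposition~\ref{prop:trans} has the form $\separg{c}{e}{X}$ for distinct $c,e\in X$ with $\separg{c}{e}{X}\subseteq\separg{a}{b}{X}$. It therefore suffices to prove that this containment forces $\{c,e\}=\{a,b\}$, for then $\separg{a}{b}{X}=\separg{c}{e}{X}$ is itself minimal.

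The heart of the argument is the geometric lemma that, under the non-collinearity hypothesis, $\{c,e\}\neq\{a,b\}$ implies $\separg{c}{e}{X}\not\subseteq\separg{a}{b}{X}$; equivalently, there is a hyperplane avoiding $X$ that separates $c$ and $e$ but not $a$ and $b$. I would first observe that the line through $c,e$ differs from the line through $a,b$: otherwise $a,b,c,e$ would be collinear, and since $\{c,e\}\neq\{a,b\}$ forces the union $\{a,b,c,e\}$ to contain at least three distinct points, $X$ would contain three collinear points. Consequently the open segment $(c,e)$ meets the line through $a,b$ in at most one point, so I can choose a point $p$ in the relative interior of $(c,e)$ that does not lie on the line through $a,b$; in particular $p\notin\{a,b\}$ and $p$ is not strictly between $a$ and $b$.

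Next I would produce the hyperplane by choosing its normal. Because $p$ is not on the segment $(a,b)$, the vectors $a-p$ and $b-p$ are not antiparallel, so the open cone $\{n:\langle n,a-p\rangle>0 \text{ and } \langle n,b-p\rangle>0\}$ is nonempty; any hyperplane through $p$ with such a normal $n$ keeps $a$ and $b$ strictly on one side and hence does not separate them. Since $p$ lies strictly between $c$ and $e$, the same hyperplane separates $c$ and $e$ as soon as $\langle n,c-p\rangle\neq0$, a condition excluding only a hyperplane's worth of normals and thus still leaving an open set of admissible $n$. Finally, to make the hyperplane avoid all of $X$, I would parametrize hyperplanes as $\langle n,x\rangle=\beta$: the two open conditions ``separates $c,e$'' and ``does not separate $a,b$'' cut out a nonempty open set of $(n,\beta)$, while passing through some point of $X$ imposes only finitely many measure-zero linear constraints, so a generic choice in the open set yields a hyperplane avoiding $X$ with the required behavior.

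The main obstacle is precisely this last construction: separating $c$ from $e$ while keeping $a$ and $b$ together is easy in the abstract, but one must realize it by an honest hyperplane missing \emph{every} point of $X$, and this is exactly where non-collinearity is used (it guarantees a good choice of $p$ and the nonemptiness of the normal cone). Granting the lemma, its contrapositive gives $\{c,e\}=\{a,b\}$ whenever $\separg{c}{e}{X}\subseteq\separg{a}{b}{X}$, which completes the proof that $\separg{a}{b}{X}$ is a minimal transversal for the full subdivisions of $\mathcal{H}(X)$.
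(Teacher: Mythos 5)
Your proof is correct. The paper itself gives no argument here --- it states only ``We omit the proof, which is straightforward'' --- so there is nothing to compare against, but your write-up supplies a complete and valid proof of the omitted claim. The reduction is sound: $\separg{a}{b}{X}$ is a transversal, by finiteness it contains a minimal one, Proposition~\ref{prop:trans} forces that minimal one to be some $\separg{c}{e}{X}$, and everything then rests on the geometric lemma that $\separg{c}{e}{X}\subseteq\separg{a}{b}{X}$ with $\{c,e\}\neq\{a,b\}$ would produce three collinear points. Your construction of the witnessing hyperplane is where the real content lies, and it checks out: non-collinearity guarantees the lines through $\{c,e\}$ and $\{a,b\}$ are distinct, hence a point $p$ of the open segment $(c,e)$ off the line $ab$ exists (and $p\notin X$, again by non-collinearity); $a-p$ and $b-p$ are then linearly independent, so the cone of normals keeping $a,b$ strictly on one side is nonempty and open; and the genericity perturbation to avoid all of $X$ while preserving the strict inequalities is legitimate since the bad set is a finite union of proper hyperplanes in $(n,\beta)$-space. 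This is presumably exactly the ``straightforward'' argument the author had in mind, and it dovetails with the paper's preceding remark showing how collinearity destroys minimality.
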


We omit the proof, which is straightforward.

\begin{notation}
Let $X$ be a set of $k\ (\geq 2)$ points in $\real^{d}$.
We denote by $\tau(X)$ the minimum cardinality of minimal transversals for the full subdivisions of $\mathcal{H}(X)$, and by $\tau_{d}(k)$ the minimum of all $\tau(X)$ for $k$-point sets $X$ whose points are in general position in $\real^{d}$.
\end{notation}

\begin{lemma}\label{lem:mintrans}
Let $X$ be a set of $k\ (\geq 2)$ points in general position in $\real^{d}$.
We have
\[
  \sum_{i=1}^{d}\binom{k-2}{i-1}\leq \tau(X)\; .
\]
\end{lemma}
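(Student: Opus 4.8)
The plan is to reduce the statement to a lower bound on each $\separg{a}{b}{X}$ and then establish that bound by a double induction that removes one point at a time. First I would record the reduction. By Proposition~\ref{prop:trans} every minimal transversal has the form $\separg{a}{b}{X}$, and by the preceding proposition (no three points are collinear, which holds in general position) every $\separg{a}{b}{X}$ with $a \neq b$ is itself a minimal transversal; hence the minimal transversals are exactly the sets $\separg{a}{b}{X}$ and $\tau(X) = \min_{a \neq b}\lvert\separg{a}{b}{X}\rvert$. So it suffices to prove the per-pair bound
\[
\lvert\separg{a}{b}{X}\rvert \;\geq\; \sum_{i=1}^{d}\binom{k-2}{i-1} \;=\; \sum_{j=0}^{d-1}\binom{k-2}{j}
\]
for every pair of distinct points $a,b \in X$, and I would prove this stronger statement by induction on $d$ and, for fixed $d$, on $k$. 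The base cases are direct: for $d=1$ any two distinct points admit at least one separating threshold, so $\lvert\separg{a}{b}{X}\rvert \geq 1$; and for $k=2$ the only separating partition is $\{\{a\},\{b\}\}$, so the count equals $1$, matching the right-hand side.

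For the inductive step (with $d \geq 2$ and $k \geq 3$) I would choose the removed point carefully: let $p_{0}$ be a vertex of $\convhull{X}$ lying in $X \setminus \{a,b\}$, which exists because in general position the hull has at least three vertices. Forgetting $p_{0}$ gives a map $\rho$ from $\separg{a}{b}{X}$ to $\separg{a}{b}{X \setminus \{p_{0}\}}$ sending $\{A,B\}$ to $\{A \setminus \{p_0\},\, B \setminus \{p_0\}\}$. I would check that $\rho$ is surjective (a hyperplane realizing a separating partition of $X \setminus \{p_0\}$ may be perturbed off $p_0$) and that each fiber has size one or two, the size being two precisely when the partition can be realized with $p_0$ on either side. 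Since the set of hyperplanes realizing a fixed partition is a connected convex region of the parameter space, a fiber has size two exactly when the partition is realizable by a hyperplane through $p_0$. Writing $c$ for the number of such ``straddled'' partitions, this yields $\lvert\separg{a}{b}{X}\rvert = \lvert\separg{a}{b}{X \setminus \{p_0\}}\rvert + c$.

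The remaining task is to identify $c$ as a lower-dimensional instance of the same quantity. Translating $p_0$ to the origin, hyperplanes through $p_0$ become linear hyperplanes, and the partitions they induce on $X \setminus \{p_0\}$ are governed by the signs of $\langle w,\, q - p_0\rangle$. Because $p_0$ is a hull vertex, all the vectors $q - p_0$ lie in an open halfspace, so central projection from $p_0$ onto a screen hyperplane is a bijection identifying these linear-hyperplane partitions separating $a$ and $b$ with the affine-hyperplane partitions of the projected $(k-1)$-point set $\overline{X \setminus \{p_0\}} \subset \real^{d-1}$ that separate $\overline{a}$ and $\overline{b}$; thus $c = \lvert\separg{\overline{a}}{\overline{b}}{\overline{X\setminus\{p_0\}}}\rvert$. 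The projected points are again in general position, since $\overline{q_1},\dots,\overline{q_d}$ lie on a common flat exactly when $p_0,q_1,\dots,q_d$ lie on a common hyperplane, which general position of $X$ forbids. Applying the inductive hypothesis in dimension $d$ with $k-1$ points to the first summand, and in dimension $d-1$ with $k-1$ points to $c$, and then collapsing the two binomial sums by Pascal's rule, produces exactly $\sum_{j=0}^{d-1}\binom{k-2}{j}$, closing the induction.

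I expect the main obstacle to lie in the fiber analysis together with the clean reduction of $c$: one must verify both that forgetting $p_0$ has fibers of size at most two and that the size-two fibers are counted by hyperplanes through $p_0$, and then that projection from an extreme point is genuinely a bijection (not merely an inequality) which preserves general position. Once these geometric facts are in place, the arithmetic is only Pascal's identity.
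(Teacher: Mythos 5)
Your proof is correct, but it takes a genuinely different route from the paper's. The paper removes one of the two distinguished points, namely $a$ itself: the restriction map $\mathcal{H}(X)\to\mathcal{H}(X\setminus\{a\})$ has fibers of size one or two, and in each size-two fiber exactly one of the two lifts separates $a$ from $b$ (since $b$ already sits in one of the two components), so $|\separg{a}{b}{X}|\geq|\mathcal{H}(X)|-|\mathcal{H}(X\setminus\{a\})|=\phi_{d}(k)-\phi_{d}(k-1)$, and the bound drops out of the cited Harding counting formula in three lines. You instead remove a hull vertex $p_{0}\notin\{a,b\}$ and run a double induction on $(d,k)$, identifying the size-two fibers with partitions realizable by hyperplanes through $p_{0}$ and then, via central projection from $p_{0}$, with a $(d-1)$-dimensional instance of the same quantity --- essentially re-deriving the Pascal recursion behind $\phi_{d}(k)$ rather than quoting it. The trade-off: your argument is self-contained (it never invokes the formula for $|\mathcal{H}(X)|$) but requires more geometric bookkeeping --- existence of a hull vertex outside $\{a,b\}$, convexity of the realization region in hyperplane-parameter space to characterize size-two fibers, and preservation of general position under projection --- all of which you identify and justify correctly. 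The paper's choice of deleting $a$ rather than a third point is what lets it avoid the entire induction, since it makes every size-two fiber contribute to $\separg{a}{b}{X}$ automatically.
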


\begin{proof}
Let $\mathcal{T}$ be a minimal transversal for the full subdivisions of $\mathcal{H}(X)$.
By Proposition~\ref{prop:trans}, $\mathcal{T}$ has the form $\separg{a}{b}{X}$ for some pair $(a,b)$.
Let us fix such two points $a$ and $b$, and define $X_{a}:=X\setminus\{a\}$.
For each member $P$ of $\mathcal{H}(X)$, its restriction $P|_{X_{a}}$ to $X_{a}$ can be realized by a hyperplane.
Conversely, every partition of $X_{a}$ realized by a hyperplane can be constructed in this way.
Thus we derive
\[
\mathcal{H}(X_{a})=\set{P|_{X_{a}}}{P\in\mathcal{H}(X)}.
\]

We observe that, for each member $R$ of $\mathcal{H}(X_{a})$, there are at most two members $P$ of $\mathcal{H}(X)$ satisfying $R=P|_{X_{a}}$.
Indeed, for $R=\{U_{1},U_{2}\}$, such candidates are described as $P:=\{U_{1}\cup\{a\}, U_{2}\}$ and $P':=\{U_{1},U_{2}\cup\{a\}\}$, if any.
Since the point $b$ lies in either $U_{1}$ or $U_{2}$, one of $P$ and $P'$ separates $a$ and $b$, and the other does not.
From this observation we derive
\[
|\mathcal{H}(X)|-|\mathcal{H}(X_{a})|\leq |\separg{a}{b}{X}|=|\mathcal{T}|.
\]
Since $X_{a}$ consists of points in general position, we derive
\begin{align*}
|\mathcal{H}(X)|-|\mathcal{H}(X_{a})| & =  \phi_{d}(k)-\phi_{d}(k-1) \\
& = \sum_{i=0}^{d} \left\{\binom{k-1}{i}- \binom{k-2}{i} \right\}\\
& = \sum_{i=1}^{d}\binom{k-2}{i-1}\; .
\end{align*}
Note that $\binom{k-1}{i}=\binom{k-2}{i}+\binom{k-2}{i-1}$ holds for $i\geq 1$:
this is clear in the case $k-1> i$, and the equation holds even in the case $k-1\leq i$ because $\binom{k-2}{i}$ becomes $0$.
\end{proof}

Figure~\ref{fig:mintrans} shows that the equality in the lemma above does not hold in general.

\begin{figure*}[tb]
  \begin{minipage}{0.33\hsize}
  \begin{center}
    \subfigure[]{\includegraphics[height=3cm]{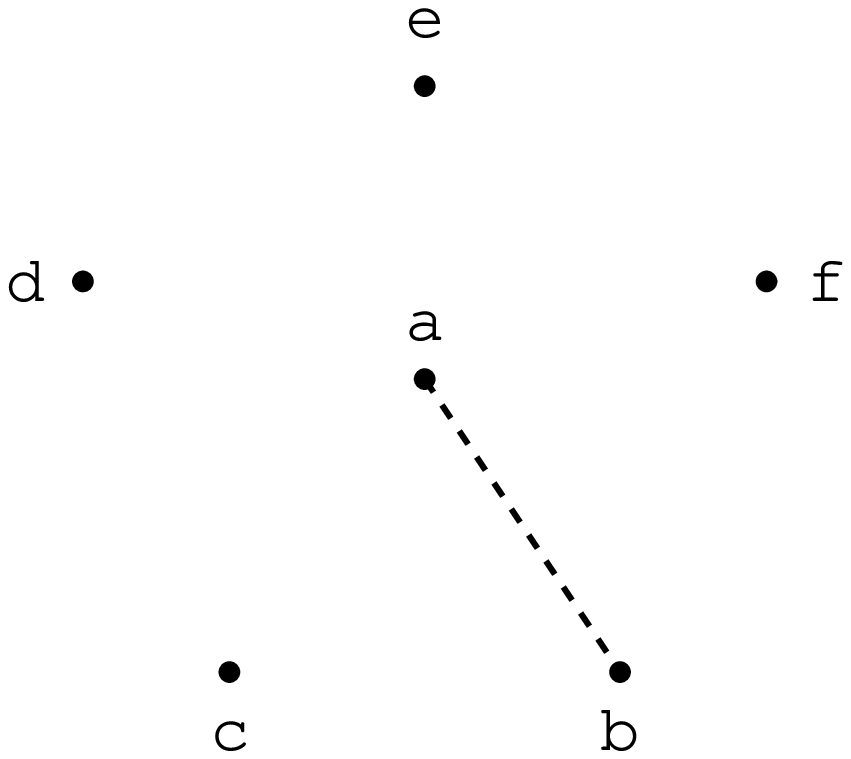}\label{fig:mintrans1}}
  \end{center}
  \end{minipage}
  \begin{minipage}{0.33\hsize}
  \begin{center}
    \subfigure[]{\includegraphics[height=3cm]{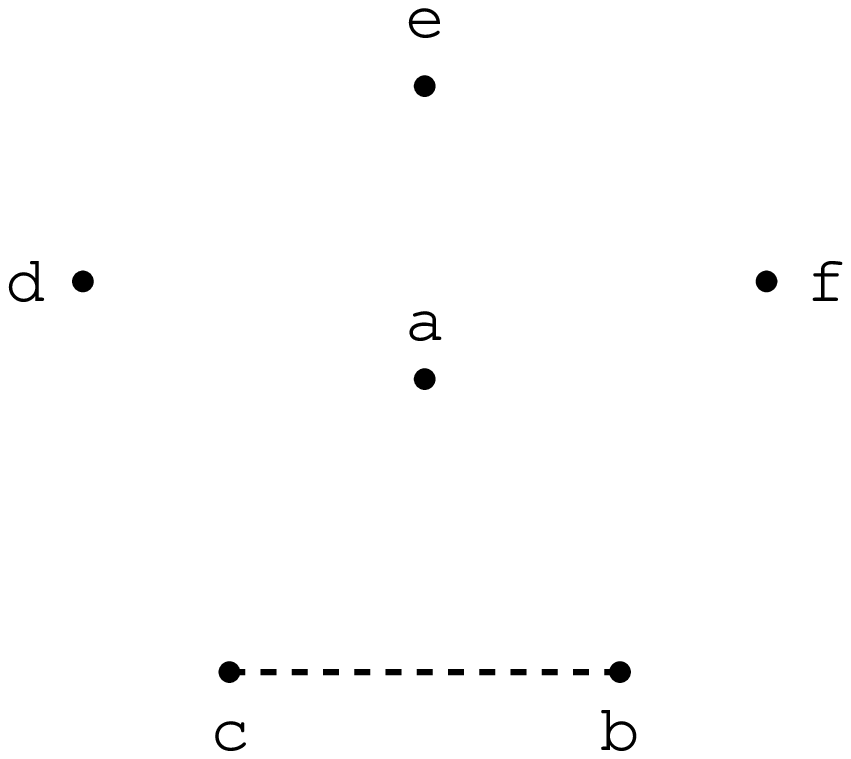}\label{fig:mintrans2}}
  \end{center}
  \end{minipage}
  \begin{minipage}{0.33\hsize}
  \begin{center}
    \subfigure[]{\includegraphics[height=3cm]{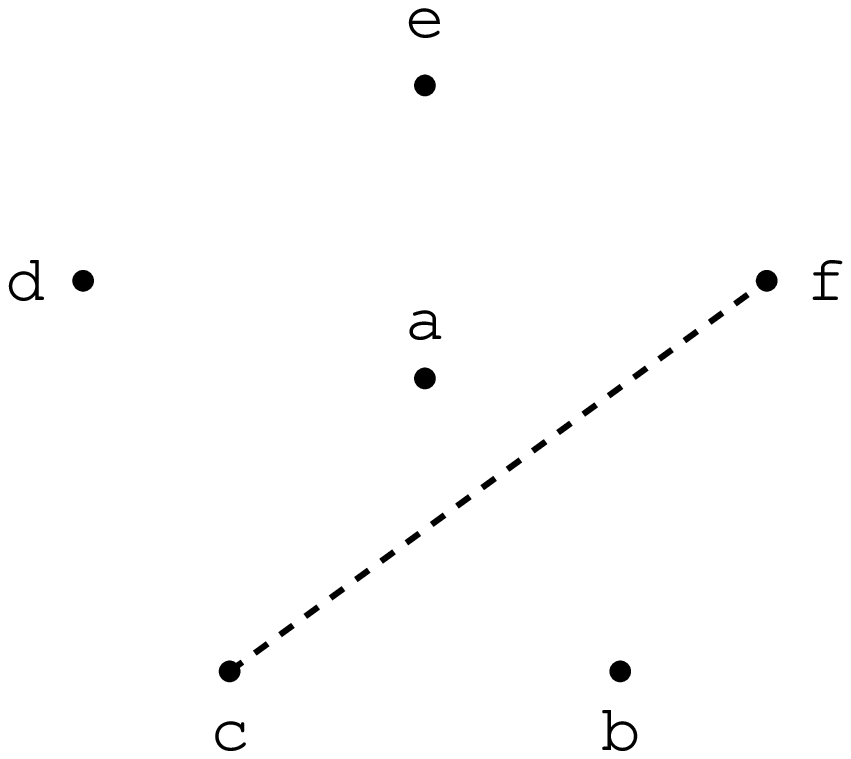}\label{fig:mintrans3}}
  \end{center}
  \end{minipage}
  \begin{minipage}{0.33\hsize}
  \begin{center}
    \subfigure[]{\includegraphics[height=3cm]{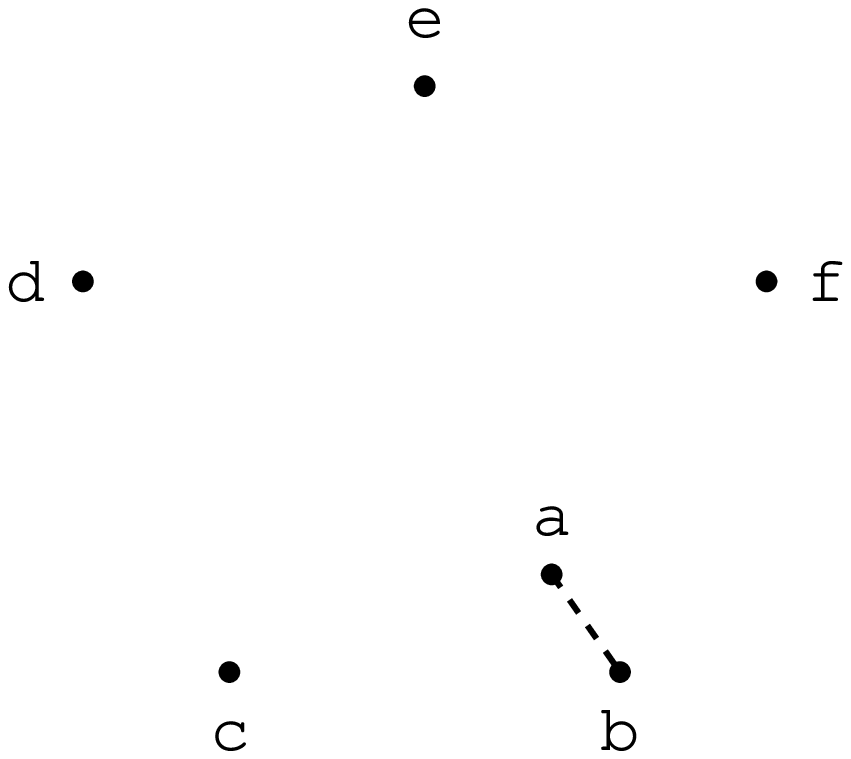}\label{fig:mintrans4}}
  \end{center}
  \end{minipage}
  \caption{Let $X$ be a set of six points in $\real^{2}$ consisting of the vertices $b,c,d,e,f$ of a regular pentagon and its center $a$.
In order to calculate $\tau(X)$, it suffices to deal with the three cases (a), (b), and (c) illustrated above.
The cardinalities of $\separg{a}{b}{X}$, $\separg{b}{c}{X}$, and $\separg{c}{f}{X}$ are $6$, $6$, and $10$, respectively.
Thus we obtain $\tau(X)=6$.
On the other hand, we have $\phi_{2}(6)-\phi_{2}(5)=5$.
If we move one of the six points, say $a$, closer to another point, say $b$, as illustrated in (d), then the cardinality of $\separg{a}{b}{X}$ becomes $5$, and the equality $\tau(X)=\phi_{2}(6)-\phi_{2}(5)$ holds in this case.}\label{fig:mintrans}
\end{figure*}

\begin{proposition}\label{prop:tau}
For $d\geq 1$ and $k\geq 2$, we have
\begin{equation*}
  \tau_{d}(k)=\sum_{i=1}^{d}\binom{k-2}{i-1}\; .
\end{equation*}
\end{proposition}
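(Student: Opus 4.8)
The plan is to establish the two inequalities $\tau_{d}(k)\geq\sum_{i=1}^{d}\binom{k-2}{i-1}$ and $\tau_{d}(k)\leq\sum_{i=1}^{d}\binom{k-2}{i-1}$ separately. The lower bound is immediate: Lemma~\ref{lem:mintrans} gives $\sum_{i=1}^{d}\binom{k-2}{i-1}\leq\tau(X)$ for every $k$-point set $X$ in general position, and taking the minimum over all such $X$ yields $\sum_{i=1}^{d}\binom{k-2}{i-1}\leq\tau_{d}(k)$. All the real work therefore lies in exhibiting a single configuration attaining equality.

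For the upper bound I would fix any set $Y$ of $k-1$ points in general position, single out a point $b\in Y$, and place the remaining point $a$ of $X:=Y\cup\{a\}$ very close to $b$, as in Figure~\ref{fig:mintrans}(d), choosing $a$ generically in a small ball around $b$ so that $X$ is again in general position (this only excludes the finitely many hyperplanes spanned by $d$ points of $Y$). With $X_{a}=X\setminus\{a\}=Y$, the proof of Lemma~\ref{lem:mintrans} already shows $|\mathcal{H}(X)|-|\mathcal{H}(X_{a})|\leq|\separg{a}{b}{X}|$, where the left-hand side equals $\phi_{d}(k)-\phi_{d}(k-1)=\sum_{i=1}^{d}\binom{k-2}{i-1}$. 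So it suffices to prove the reverse inequality $|\separg{a}{b}{X}|\leq|\mathcal{H}(X)|-|\mathcal{H}(X_{a})|$ for this particular $X$.

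To do so I would reexamine the restriction map $P\mapsto P|_{X_{a}}$ from $\mathcal{H}(X)$ onto $\mathcal{H}(X_{a})$. Each $R=\{U_{1},U_{2}\}\in\mathcal{H}(X_{a})$ has at most two preimages, namely $\{U_{1}\cup\{a\},U_{2}\}$ and $\{U_{1},U_{2}\cup\{a\}\}$, and as noted in the lemma at most one of them lies in $\separg{a}{b}{X}$; moreover $|\mathcal{H}(X)|-|\mathcal{H}(X_{a})|$ counts exactly those $R$ with two realizable preimages. Consequently the reverse inequality is equivalent to the statement that no $R$ has a single realizable preimage which separates $a$ and $b$. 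This is the crux. Assume $b\in U_{1}$, so the preimage that does \emph{not} separate $a$ and $b$ is $\{U_{1}\cup\{a\},U_{2}\}$; I must show it is realizable whenever $R$ is. For a nontrivial $R$, a hyperplane $g_{R}$ realizing $R$ places $b$ in an open halfspace, hence at a positive distance $\delta_{R}$ from $g_{R}$; setting $\delta:=\min_{R}\delta_{R}>0$ (a minimum over the finitely many members of $\mathcal{H}(Y)$) and placing $a$ within distance $\delta$ of $b$ forces $a$ onto the $U_{1}$-side of every $g_{R}$, so $g_{R}$ also realizes $\{U_{1}\cup\{a\},U_{2}\}$. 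The trivial partition $R=\{Y\}$ is handled directly, its non-separating preimage $\{X\}$ being realizable by definition. Thus the non-separating preimage always exists, which rules out the bad case and gives $|\separg{a}{b}{X}|\leq|\mathcal{H}(X)|-|\mathcal{H}(X_{a})|$.

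Combining the two bounds yields $|\separg{a}{b}{X}|=\sum_{i=1}^{d}\binom{k-2}{i-1}$; since $X$ is in general position, $\separg{a}{b}{X}$ is a minimal transversal, so $\tau(X)\leq\sum_{i=1}^{d}\binom{k-2}{i-1}$ and hence $\tau_{d}(k)\leq\sum_{i=1}^{d}\binom{k-2}{i-1}$. Together with the lower bound this proves the proposition. I expect the only delicate points to be the uniform choice of $\delta$ and the simultaneous requirement that $a$ keep $X$ in general position; both are handled by the finiteness of $\mathcal{H}(Y)$ together with the fact that general position is a generic condition, so that no single partition can force $a$ off the side of $b$.
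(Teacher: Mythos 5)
Your proof is correct and follows essentially the same route as the paper: the lower bound is quoted from Lemma~\ref{lem:mintrans}, and the upper bound is obtained by placing one point sufficiently close to $b$ (the configuration of Figure~\ref{fig:mintrans}(d)) so that every realizable partition of $X\setminus\{a\}$ admits a realizable lift that does \emph{not} separate $a$ and $b$, forcing $|\separg{a}{b}{X}|=|\mathcal{H}(X)|-|\mathcal{H}(X\setminus\{a\})|$. The only difference is presentational: your uniform choice of $\delta=\min_{R}\delta_{R}$ spells out the step the paper dismisses with ``we can easily check this fact.''
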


\begin{proof}
Let $X$ be a set of $k$ points in general position in $\real^{d}$.
Let us fix two distinct points $a$ and $b$ in $X$.
We show that if we move $a$ ``sufficiently closer'' to $b$ as shown in Figure~\ref{fig:mintrans}, then we obtain
\[|\separg{a}{b}{X}|=\sum_{i=1}^{d}\binom{k-2}{i-1}\;.\]
This is sufficient for the proof because the right-hand side is a lower bound for all $\tau(X)$ for $k$-point sets $X$ whose points are in general position in $\real^{d}$.

For each member $P$ of $\separg{a}{b}{X}$, let us choose a hyperplane $h_{P}$ which realizes $P$, and denote by $h_{P}^{+}$ the one of the two open halfspaces associated with $h_{P}$ which contains $b$.
Let us choose a point $c$ lying between $a$ and $b$ so that
$c$ lies in $h_{P}^{+}$ for all $P\in\separg{a}{b}{X}$ and the points in $(X\setminus\{a\})\cup\{c\}$ are in general position.
Clearly we can choose such a point.

Let us define $X':=(X\setminus\{a\})\cup\{c\}$.
By construction, $\separg{c}{b}{X'}$ has the following property:
for each member $P$ of $\separg{c}{b}{X'}$, there is a member $P'$ of $\notseparg{c}{b}{X'}$ satisfying $P|_{X'_{c}}= P'|_{X'_{c}}$, where $X'_{c}:=X'\setminus\{c\}$.
We can easily check this fact.
From the observation in the proof of Lemma~\ref{lem:mintrans}, we derive 
\begin{align*}
|\separg{c}{b}{X'}| & = |\mathcal{H}(X')|-|\mathcal{H}(X'_{c})|\\
& =\sum_{i=1}^{d}\binom{k-2}{i-1}\;.
\end{align*}
\end{proof}

\begin{notation}
Let $X$ be a set of $k\ (\geq 2)$ points in $\real^{d}$.
We denote by $\eta(X)$ the maximum cardinality of minimal transversals for the full subdivisions of $\mathcal{H}(X)$, and by $\eta_{d}(k)$ the maximum of all $\eta(X)$ for $k$-point sets $X$ whose points are in general position in $\real^{d}$.
\end{notation}

\begin{proposition}
For $d\geq 1$ and $k\geq 2$, we have
\[
\tau_{d}(k)+\eta_{d}(k)=\phi_{d}(k).
\]
\end{proposition}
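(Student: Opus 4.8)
The plan is to reduce the identity to the single evaluation $\eta_{d}(k)=\phi_{d}(k-1)$ and then to prove the latter by a matching upper bound and construction. Indeed, Proposition~\ref{prop:tau} together with the computation carried out in the proof of Lemma~\ref{lem:mintrans} gives $\tau_{d}(k)=\phi_{d}(k)-\phi_{d}(k-1)$, so the asserted equation $\tau_{d}(k)+\eta_{d}(k)=\phi_{d}(k)$ is equivalent to $\eta_{d}(k)=\phi_{d}(k-1)$. Since every minimal transversal has the form $\separg{a}{b}{X}$ (Proposition~\ref{prop:trans}), it suffices to bound $|\separg{a}{b}{X}|$ from above for all general-position $X$ and to exhibit a single configuration attaining the bound.

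For the upper bound I would reuse the observation from the proof of Lemma~\ref{lem:mintrans}. Fix $X$ in general position, a pair $(a,b)$, and set $X_{a}:=X\setminus\{a\}$. Restriction $P\mapsto P|_{X_{a}}$ maps $\separg{a}{b}{X}$ into $\mathcal{H}(X_{a})$. For a given $R=\{U_{1},U_{2}\}\in\mathcal{H}(X_{a})$ with $b\in U_{2}$, the only extension separating $a$ and $b$ is $\{U_{1}\cup\{a\},U_{2}\}$; hence at most one member of $\separg{a}{b}{X}$ restricts to $R$, so the restriction map is injective. Therefore $|\separg{a}{b}{X}|\leq|\mathcal{H}(X_{a})|=\phi_{d}(k-1)$, and taking the maximum over all pairs and all general-position $X$ yields $\eta_{d}(k)\leq\phi_{d}(k-1)$.

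It then remains to construct a general-position set attaining equality, i.e.\ a pair $(a,b)$ for which this restriction map is also surjective: every $R\in\mathcal{H}(X_{a})$ must admit the separating extension $\{U_{1}\cup\{a\},U_{2}\}$, equivalently $\convhull{(U_{1}\cup\{a\})}\cap\convhull{U_{2}}=\emptyset$ for every hyperplane-realizable partition $\{U_{1},U_{2}\}$ of $X_{a}$ with $b\in U_{2}$. I would place the $k-2$ points other than $b$ in convex position forming a ``cap'', let $b$ be the unique minimizer of $\langle\,\cdot\,,\delta\rangle$ on $X_{a}$ for a suitable direction $\delta$ (so that $b$ lies opposite the cap), and push $a$ far along $+\delta$. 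A limiting argument shows that, once $a$ is far enough, $\{U_{1}\cup\{a\},U_{2}\}$ fails to be realizable only if $\convhull{U_{2}}$ meets the $\delta$-shadow $\convhull{U_{1}}+\real_{\geq 0}\,\delta$ of $U_{1}$; so the task is to arrange $X_{a}$ so that, for every such partition, the part $U_{2}$ containing $b$ never lies in the $\delta$-shadow of its complement.

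The main obstacle is precisely this last verification, since it is a statement about all $\phi_{d}(k-1)$ partitions simultaneously, and a careless placement leaves some partition for which $a$ is forced onto $b$'s side; already the pentagon-plus-center configuration of Figure~\ref{fig:mintrans} is of this deficient kind, its largest $\separg{c}{f}{X}$ having only $10<\phi_{2}(5)$ members. I expect the convex ``cap'' placement to succeed because strict convexity makes each omitted vertex protrude, in the $\delta$-direction, beyond the hull of the remaining points, which is exactly what rules out the offending shadows; making this rigorous in arbitrary dimension, and confirming general position after a small perturbation, is the technical heart of the argument, and it plays the role dual to the ``move $a$ close to $b$'' perturbation used for $\tau_{d}(k)$ in Proposition~\ref{prop:tau}. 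Granting it, the restriction map is a bijection, $|\separg{a}{b}{X}|=\phi_{d}(k-1)$, whence $\eta_{d}(k)=\phi_{d}(k-1)$ and $\tau_{d}(k)+\eta_{d}(k)=\phi_{d}(k)$.
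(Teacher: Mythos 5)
Your reduction to $\eta_{d}(k)=\phi_{d}(k-1)$ is sound (Proposition~\ref{prop:tau} does give $\tau_{d}(k)=\phi_{d}(k)-\phi_{d}(k-1)$), and your upper bound is correct: since every minimal transversal is some $\separg{a}{b}{X}$ and the restriction $P\mapsto P|_{X_{a}}$ is injective on $\separg{a}{b}{X}$ (a two-component $R$ with $b\in U_{2}$ has at most the one separating extension $\{U_{1}\cup\{a\},U_{2}\}$, and the trivial $R=\{X_{a}\}$ only $\{\{a\},X_{a}\}$), one gets $|\separg{a}{b}{X}|\leq|\mathcal{H}(X_{a})|=\phi_{d}(k-1)$. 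This is a genuinely different route from the paper's, which never constructs a maximizing configuration directly.

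The gap is the one you yourself flag: the existence of a general-position $X$ and a pair $(a,b)$ for which the restriction map is surjective, i.e.\ for which \emph{every} two-part member of $\mathcal{H}(X_{a})$ with $b\in U_{2}$ admits the separating extension. This is the entire content of the lower bound, it is a simultaneous condition on all $\phi_{d}(k-1)$ partitions, and your ``convex cap plus $a$ pushed to infinity along $\delta$'' sketch is only a plausibility argument; nothing in the proposal rules out that some realizable partition of $X_{a}$ places $b$'s part inside the $\delta$-shadow of its complement, and the limiting step (disjointness from the shadow implies realizability for $a$ far enough, with general position preserved) is also unproven. The paper closes exactly this gap without any new construction: writing $\phi_{d}(k)=|\separg{a}{b}{X}|+|\notseparg{a}{b}{X}|$, it applies a projective transformation sending a hyperplane that realizes some $P\in\separg{a}{b}{X}$ to infinity, which carries $X$ to a general-position set $X'$ and induces a bijection between $\notseparg{a}{b}{X}$ and $\separg{a'}{b'}{X'}$; hence minimizers of $|\separg{a}{b}{X}|$ (already built in Proposition~\ref{prop:tau} by moving $a$ close to $b$) are carried to maximizers. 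Your ``push $a$ to infinity'' heuristic is precisely the projective dual of that ``move $a$ close to $b$'' perturbation, so the cleanest repair of your argument is to replace the cap construction by this duality rather than to verify the shadow condition by hand.
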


\begin{proof}
Let $X$ be a set of $k$ points in general position in $\real^{d}$, and let $a$ and $b$ be two distinct points in $X$.
We will present another $k$-point set $X'$ such that 
$|\separg{a}{b}{X}|+|\separg{a'}{b'}{X'}|=\phi_{d}(k)$ for some pair $(a', b')$ of points in $X'$.

Let us fix a member $P$ of $\separg{a}{b}{X}$, and suppose that a hyperplane $h_{P}$ realizes $P$.
Let us construct the projective space extending $\real^{d}$.
In this space, we have the projective hyperplane $\widehat{h_{P}}$ extending $h_{P}$ and the complement of $\widehat{h_{P}}$ is a $d$-dimensional affine space.
We denote by $X'$ the set in this affine space corresponding to $X$, and by $a'$ and $b'$ the points corresponding to $a$ and $b$, respectively.

By construction, hyperplanes in the original space $\real^{d}$ which do not separate $a$ and $b$ correspond to those in the new affine space which separate $a'$ and $b'$.
This induces a bijection between $\notseparg{a}{b}{X}$ and $\separg{a'}{b'}{X'}$.
Thus we derive
\begin{align*}
\phi_{d}(k) & = |\separg{a}{b}{X}|+|\notseparg{a}{b}{X}|\\
           & =  |\separg{a}{b}{X}|+|\separg{a'}{b'}{X'}|.
\end{align*}
In particular, $|\separg{a}{b}{X}|=\tau_{d}(k)$ if and only if $|\separg{a'}{b'}{X'}|=\eta_{d}(k)$.
\end{proof}

\begin{corollary}
For $d\geq 1$ and $k\geq 2$, we have
\begin{equation*}
\eta_{d}(k)=\sum_{i=0}^{d}\binom{k-2}{i}\; .
\end{equation*}
\end{corollary}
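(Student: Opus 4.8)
The plan is to combine the two results immediately preceding the corollary, after which the statement becomes a one-line algebraic consequence. The Proposition just proved supplies the identity $\tau_{d}(k)+\eta_{d}(k)=\phi_{d}(k)$, and Proposition~\ref{prop:tau} evaluates $\tau_{d}(k)=\sum_{i=1}^{d}\binom{k-2}{i-1}$. Solving the first relation for $\eta_{d}(k)$ and substituting the second, together with the definition $\phi_{d}(k)=\sum_{i=0}^{d}\binom{k-1}{i}$, reduces the entire problem to verifying the single binomial identity
\[
\sum_{i=0}^{d}\binom{k-1}{i} - \sum_{i=1}^{d}\binom{k-2}{i-1} = \sum_{i=0}^{d}\binom{k-2}{i}.
\]

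To establish this identity I would invoke Pascal's rule in the form $\binom{k-1}{i}=\binom{k-2}{i}+\binom{k-2}{i-1}$, valid for every $i\geq 1$. This is exactly the relation already used in the proof of Lemma~\ref{lem:mintrans}, including the boundary observation that it remains correct when $k-1\leq i$ because the term $\binom{k-2}{i}$ vanishes there. Concretely, I would split off the $i=0$ term of $\phi_{d}(k)$, which equals $\binom{k-1}{0}=\binom{k-2}{0}=1$, and apply Pascal's rule to each remaining term with $i\geq 1$. This rewrites $\phi_{d}(k)$ as the sum of $\sum_{i=0}^{d}\binom{k-2}{i}$ and $\sum_{i=1}^{d}\binom{k-2}{i-1}$; the latter sum is precisely $\tau_{d}(k)$, so subtracting it leaves exactly the claimed value of $\eta_{d}(k)$.

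The main obstacle here is essentially nonexistent: once the two preceding propositions are in hand, the corollary follows immediately, and the only care required is the standard bookkeeping with the index ranges in Pascal's rule. In particular, I would make sure that the $i=0$ term is handled separately rather than forced through the Pascal identity, and that the $\binom{k-2}{i-1}$ contributions are aligned so as to cancel cleanly against $\tau_{d}(k)$.
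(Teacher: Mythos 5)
Your proposal is correct and follows exactly the route the paper intends: the corollary is stated without proof precisely because it is the immediate algebraic consequence of $\tau_{d}(k)+\eta_{d}(k)=\phi_{d}(k)$ and Proposition~\ref{prop:tau}, via the same Pascal identity already justified in the proof of Lemma~\ref{lem:mintrans}. Your index bookkeeping (separating the $i=0$ term and noting the identity still holds when $k-1\leq i$) is accurate.
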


We have calculated $\eta_{d}(k)$, which is the maximum of all $\eta(X)$ for $k$-point sets $X$ whose points are in general position in $\real^{d}$.
Regardless of the condition that the points are in general position, the fact remains that $\eta_{d}(k)$ gives the maximum cardinality.

\begin{proposition}\label{prop:eta}
Let $X$ be a set of $k\ (\geq 2)$ points in $\real^{d}$.
The maximum cardinality of minimal transversals for the full subdivisions of $\mathcal{H}(X)$ is at most $\eta_{d}(k)$.
\end{proposition}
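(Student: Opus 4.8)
The plan is to reduce the claim to the general-position case already settled, by perturbing $X$ into general position and verifying that such a perturbation cannot decrease the size of the separating sets. By Proposition~\ref{prop:trans}, every minimal transversal for the full subdivisions of $\mathcal{H}(X)$ has the form $\separg{a}{b}{X}$ for some pair $(a,b)$ of distinct points of $X$, so it suffices to prove that $|\separg{a}{b}{X}|\leq\eta_{d}(k)$ whenever $\separg{a}{b}{X}$ is a minimal transversal. I fix such a pair $(a,b)$.

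I would first record that each member $P$ of $\separg{a}{b}{X}$ is realized by a hyperplane $h_{P}$ avoiding $X$ that places $a$ and $b$ in opposite open halfspaces; equivalently, the two parts of the bipartition $P$ are strictly separated by $h_{P}$. Strict separation of two finite point sets is an open condition on the coordinates: the points lie at positive distance from $h_{P}$, so $h_{P}$ continues to avoid $X$ and to separate the same labeled parts after any sufficiently small displacement of the points of $X$. Since $\separg{a}{b}{X}$ is finite, I can choose a single radius $\varepsilon>0$ that works simultaneously for all $P\in\separg{a}{b}{X}$, and, shrinking $\varepsilon$ if necessary, I can pick within this ball a point set $X^{*}$ in general position, since general position fails only on a nowhere-dense set and hence holds densely. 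Writing $a^{*},b^{*}$ for the points of $X^{*}$ corresponding to $a,b$, the bipartition of $X^{*}$ carrying the same labels as $P$ is then a member of $\mathcal{H}(X^{*})$ separating $a^{*}$ and $b^{*}$. Distinct labeled bipartitions stay distinct, so this assignment is an injection of $\separg{a}{b}{X}$ into $\separg{a^{*}}{b^{*}}{X^{*}}$, whence
\[
|\separg{a}{b}{X}|\leq|\separg{a^{*}}{b^{*}}{X^{*}}|.
\]

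Finally I would invoke the general-position case. Because $X^{*}$ is in general position, no three of its points are collinear, so by the earlier proposition every $\separg{a^{*}}{b^{*}}{X^{*}}$ is a minimal transversal for the full subdivisions of $\mathcal{H}(X^{*})$. Its cardinality is therefore bounded by $\eta(X^{*})$, and since $X^{*}$ is a $k$-point set in general position, $\eta(X^{*})\leq\eta_{d}(k)$. Chaining the inequalities yields $|\separg{a}{b}{X}|\leq\eta_{d}(k)$, as desired. (When $d=1$ the hypothesis ``no three collinear'' is vacuous for $k\geq 3$, but there every minimal transversal is a singleton, so the bound holds directly.)

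The main obstacle is the perturbation step, where two requirements must be met at once: (i) all of the finitely many separations recorded in $\separg{a}{b}{X}$ must be preserved, which is exactly the openness of strict separation; and (ii) the new configuration must be in general position, which is a dense rather than open condition. The resolution is that the open neighborhood of $X$ on which (i) holds must meet the dense set of general-position configurations, so a suitable $X^{*}$ exists; once it is in place, the remainder is the bookkeeping above together with the bound already established for points in general position.
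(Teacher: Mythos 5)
Your proposal is correct and follows essentially the same route as the paper: perturb $X$ into general position so that every separation realized over $X$ survives (openness of strict separation plus density of general position), obtain an injection of the relevant separating sets into those of the perturbed configuration, and invoke the general-position bound $\eta(X^{*})\leq\eta_{d}(k)$. The paper states this more tersely as an ``embedding'' of $\mathcal{H}(X)$ into $\mathcal{H}(X')$ giving $\eta(X)\leq\eta(X')\leq\eta_{d}(k)$; your version usefully spells out why the embedding actually yields that inequality, via Proposition~\ref{prop:trans} and the minimality of $\separg{a^{*}}{b^{*}}{X^{*}}$ in general position.
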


\begin{proof}
By perturbation of the points in $X$, 
we can construct a set $X'$ of points in general position so that
$\mathcal{H}(X)$ is ``embedded'' in $\mathcal{H}(X')$ through the bijection $\phi\colon X\to X'$ induced by the perturbation.
Here we say that $\mathcal{H}(X)$ is embedded in $\mathcal{H}(X')$ through $\phi$ if, for each member $P$ of $\mathcal{H}(X)$, the partition $\set{\phi(U)}{U\in P}$ mapped by $\phi$ belongs to $\mathcal{H}(X')$ and this mapping is injective.
Thus we derive $\eta(X)\leq \eta(X')\leq \eta_{d}(k)$.
\end{proof}

\section{Main Theorem}\label{sec:theorem}
The following lemma slightly strengthens Kirchberger's theorem~\cite{kirch:kirchberger}.

\begin{lemma}\label{lem:kirch}
Let $X$ be a finite subset of $\real^{d}$ such that each point in $X$ is painted with one of two colors.
Let $p$ be a point in $X$.
If every subset of $X$ having $p$ whose size is at most $d+2$ can be separated along the colors, then $X$ can be separated along the colors.
\end{lemma}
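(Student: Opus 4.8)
The plan is to translate separability into the nonemptiness of an intersection of open halfspaces in a dual parameter space, and then to use the distinguished point $p$ to lower the ambient dimension by one so that the ordinary Helly theorem applies. Thus the real engine is Helly rather than Kirchberger, and the whole content of the strengthening lies in how $p$ is exploited in the dimension reduction.

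First I would set up the dual encoding. I parametrize a hyperplane of $\real^{d}$ by a pair $(w,t)\in\real^{d+1}$ through $\{v:\langle w,v\rangle=t\}$, and to each point $x\in X$ I attach the open halfspace $H_{x}=\{y\in\real^{d+1}:\langle n_{x},y\rangle>0\}$, where $n_{x}=(x,-1)$ if $x$ is red and $n_{x}=(-x,1)$ if $x$ is blue. Each $H_{x}$ is then an open halfspace of $\real^{d+1}$ whose bounding hyperplane passes through the origin, and a subset $S\subseteq X$ is separable along the colors exactly when $\bigcap_{x\in S}H_{x}\neq\emptyset$; the orientation of the separating hyperplane is immaterial, since interchanging the roles of $h^{+}$ and $h^{-}$ corresponds to negating $(w,t)$. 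In particular $X$ is separable iff $\bigcap_{x\in X}H_{x}\neq\emptyset$.

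The key step exploits $p$ to normalize. Because every $H_{x}$ is invariant under positive scaling, a common point of the $H_{x}$ may be sought on the affine hyperplane $A:=\{y\in\real^{d+1}:\langle n_{p},y\rangle=1\}$, which is an affine copy of $\real^{d}$. For $x\neq p$ I set $C_{x}:=H_{x}\cap A$, an open halfspace of $A$ (never empty, as the pair $\{p,x\}$ is trivially separable). Dividing any witness by $\langle n_{p},\cdot\rangle$ shows that $\bigcap_{x\in X}H_{x}\neq\emptyset$ iff $\bigcap_{x\neq p}C_{x}\neq\emptyset$; the same normalization applied to a subfamily shows that, for any $x_{1},\dots,x_{m}$ with $m\leq d+1$, the intersection $C_{x_{1}}\cap\dots\cap C_{x_{m}}$ is nonempty iff the set $\{p,x_{1},\dots,x_{m}\}$ is separable. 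Now I would invoke Helly's theorem in $A\cong\real^{d}$: the hypothesis says every subset of $X$ of size at most $d+2$ containing $p$ is separable, so every at most $(d+1)$-element subfamily of $\{C_{x}:x\neq p\}$ has a common point, and Helly yields $\bigcap_{x\neq p}C_{x}\neq\emptyset$, whence $\bigcap_{x\in X}H_{x}\neq\emptyset$ and $X$ is separable. (If $|X|\leq d+2$ there is nothing to prove, as $X$ itself contains $p$.)

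The main obstacle, I expect, is not the Helly application but justifying that $p$ may be forced into every relevant witness. Here it is essential that the $H_{x}$ are cones through the origin: a naive ``fixed-set'' Helly theorem, demanding that $p$ appear in every tested subfamily, is false for general convex sets, and it is precisely the homogeneity that is consumed by passing to the slice $A$. Accordingly, the care in the proof goes into the two equivalences of the previous paragraph --- that restricting to $A$ preserves (sub)intersections and that the meetings of at most $d+1$ of the $C_{x}$ match the separability of subsets of size at most $d+2$ through $p$ --- together with checking the routine degenerate cases (parallel normals $n_{x}\parallel n_{p}$, and point sets of a single color), and confirming that the $C_{x}$ are genuinely convex and nonempty so that Helly is legitimately applicable.
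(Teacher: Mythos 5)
Your proposal is correct and follows essentially the same route as the paper: encode separability of a set containing $p$ as nonemptiness of an intersection of open halfspaces in a dual space of dimension $d$, then apply Helly's theorem there. The paper achieves the dimension reduction by translating $p$ to the origin and parametrizing hyperplanes as $\sum_{i}\lambda_{i}x_{i}=1$ directly in $\real^{d}$, which is exactly your slice of the homogeneous cones in $\real^{d+1}$ by the affine hyperplane $\langle n_{p},y\rangle=1$.
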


\begin{proof}
Without loss of generality, we assume that 
$X$ consists of red points and blue points, and that $p$ is the origin in $\real^{d}$ painted with red.
For each point $a=(a_{1},\ldots,a_{d})\in X\setminus\{p\}$, we define the following halfspace $H_{a}$:
if it is red,
\[
H_{a}=\set{(\lambda_{1},\ldots,\lambda_{d})\in\real^{d}}{ \sum_{1}^{d}\lambda_{i} a_{i} < 1};
\]
otherwise,
\[
H_{a}=\set{(\lambda_{1},\ldots,\lambda_{d})\in\real^{d}}{ \sum_{1}^{d}\lambda_{i} a_{i} > 1}.
\]

Let $h$ be arbitrary hyperplane avoiding both of $a$ and $p$, which is denoted by $\sum_{i=1}^{d}\lambda_{i}x_{i}=1$.
Then we observe that $h$ separates $a$ and $p$ if and only if $\sum_{i=1}^{d}\lambda_{i}a_{i}>1$.
Thus a subset $S$ of $X$ having $p$ can be separated along the colors if and only if $\bigcap_{a\in S\setminus\{p\}}H_{a}\not=\emptyset$.

Suppose that every subset of $X$ having $p$ whose size is at most $d+2$ can be separated along the colors.
From the observation above, it follows that every $d+1$ or fewer members of $\{H_{a}\}_{a\in X\setminus\{p\}}$ have a common point.
From Helly's theorem (see \cite{danzer:convex}), we derive that all the members of $\{H_{a}\}_{a\in X\setminus\{p\}}$ have a common point.
Therefore there is a hyperplane separating $X$ along the colors.
\end{proof}

We are now ready to prove our main theorem.

\begin{theorem}\label{theo:main}
Let $X$ be a finite subset of $\real^{d}$, and suppose that each point in $X$ is painted with one of $k$ colors.
If every $(d+1)\cdot\eta_{d}(k)+k$ or fewer points in $X$ can be partitioned by hyperplanes along the colors, then all the points in $X$ can be partitioned by hyperplanes along the colors.
\end{theorem}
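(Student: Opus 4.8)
The plan is to reduce the $k$-color statement to a controlled number of two-color separation problems and then feed each of them to the anchored Kirchberger lemma (Lemma~\ref{lem:kirch}). First I would establish a reformulation of partitionability: a finite set $Y$ can be partitioned by hyperplanes along the colors if and only if, for every pair of colors $(i,j)$ occurring in $Y$, there is a single hyperplane that keeps each color class inside one open halfspace and places the classes of colors $i$ and $j$ on opposite sides. Call such a hyperplane a color-respecting separator of $(i,j)$. One direction is immediate from conditions (1)--(3): a hyperplane that separates no two equally colored points must confine each class to one open halfspace, so it avoids all points and is color-respecting; conversely, collecting one color-respecting separator for each color pair yields a family meeting (1)--(3). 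This reduces the theorem to producing, for each color pair, one color-respecting separator valid on all of $X$.

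Then I would fix a pair $(i,j)$, pick one representative $p_{l}$ from each color class, and set $R:=\{p_{1},\dots,p_{k}\}$; these $k$ representatives account for the additive $k$ in the bound. Every color-respecting separator of $(i,j)$ on $X$ restricts to a hyperplane cut of $R$ separating $p_{i}$ and $p_{j}$, so the candidate combinatorial types are precisely the members of $\separg{p_{i}}{p_{j}}{R}$, each one prescribing, for every class, the side carrying its representative. The key counting claim is that there are at most $\eta_{d}(k)$ candidates: perturbing $R$ to a set $R'$ in general position embeds $\mathcal{H}(R)$ into $\mathcal{H}(R')$ as in the proof of Proposition~\ref{prop:eta}, sending cuts that separate $p_{i},p_{j}$ injectively to cuts that separate their images; for points in general position such a set of cuts is a minimal transversal, hence has at most $\eta_{d}(k)$ members.

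Next I would argue by contradiction. Assume no color-respecting separator of $(i,j)$ exists on $X$. Then each candidate $\pi\in\separg{p_{i}}{p_{j}}{R}$, read as a two-coloring of $X$ (the classes on the side of $p_{i}$ red, the remaining classes blue), fails to be linearly separable on $X$. Anchoring Lemma~\ref{lem:kirch} at $p_{i}$, each failure is witnessed by a subset $S_{\pi}\ni p_{i}$ with $|S_{\pi}|\le d+2$, that is, by $p_{i}$ and at most $d+1$ further points. Putting $S:=R\cup\bigcup_{\pi}S_{\pi}$, each candidate adds at most $d+1$ new points and there are at most $\eta_{d}(k)$ candidates, so $|S|\le(d+1)\eta_{d}(k)+k$. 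On $S$ every candidate still fails, so $S$ has no color-respecting separator of the classes $i$ and $j$ and therefore cannot be partitioned along the colors, contradicting the hypothesis. Hence a color-respecting separator exists for each color pair, and the union of these separators partitions $X$.

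The step I expect to be the main obstacle is the candidate count in the second paragraph: verifying that all relevant combinatorial types are captured by $\separg{p_{i}}{p_{j}}{R}$ and bounded by $\eta_{d}(k)$ through the embedding of Proposition~\ref{prop:eta} and the minimal-transversal characterization is the sole place where the exact value of $\eta_{d}(k)$ enters. A second delicate point is keeping the cost of each candidate at $d+1$ rather than $d+2$; this is exactly why the anchored form in Lemma~\ref{lem:kirch} is needed, since using the common anchor $p_{i}$ for all candidates lets the extra $+1$ be absorbed once into the $k$ representatives instead of being paid $\eta_{d}(k)$ times.
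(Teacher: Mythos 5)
Your proof is correct and follows essentially the same route as the paper: pick one representative per color, bound the hyperplane cuts of the representative set that separate a fixed pair by $\eta_{d}(k)$ via perturbation to general position, and use the anchored Lemma~\ref{lem:kirch} so that each failing cut is charged only $d+1$ new witness points on top of the $k$ representatives. The only real difference is organizational: where the paper applies Proposition~\ref{prop:trans} to extract a minimal transversal of the form $\separg{a}{b}{Y}$ from the set of non-realizable extensions and then bounds it by Proposition~\ref{prop:eta}, you localize to a single color pair via the ``color-respecting separator'' reformulation and therefore must bound the full set $\separg{p_{i}}{p_{j}}{R}$ (which need not be a minimal transversal when $R$ is degenerate) by $\eta_{d}(k)$ --- a slightly stronger counting fact that your embedding argument correctly supplies.
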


\begin{proof}
From each color, let us choose a point in $X$ painted with the color, and let us denote by $Y$ the set of the points chosen.
For each member $P$ of $\mathcal{H}(Y)$, let us construct a partition $\widehat{P}$ of $X$ in such a way that, for each component $U$ of $P$, the component $\widehat{U}$ of $\widehat{P}$ consists of points each of which has a common color to some point in $U$.
Clearly this extension is unique.

To prove the contraposition, assume that $X$ can not be partitioned by hyperplanes along the colors.
Let us define:
\[
\mathcal{T} = \set{P\in\mathcal{H}(Y)}{\widehat{P}\not\in\mathcal{H}(X)}.
\]
We show that $\mathcal{T}$ is a transversal for the full subdivisions of $\mathcal{H}(Y)$.
Otherwise, there is a full subdivision $\mathcal{F}$ of $\mathcal{H}(Y)$ which is disjoint to $\mathcal{T}$.
We observe that $\widehat{\mathcal{F}}:=\set{\widehat{P}}{P\in\mathcal{F}}$ consists of partitions of $X$ which can be realized by hyperplanes.
By construction, such hyperplanes realizing members of $\widehat{\mathcal{F}}$ partition $X$ along the colors.
This contradicts to our assumption.

If $\mathcal{T}$ is not a minimal transversal, we redefine $\mathcal{T}$ as a minimal one contained in $\mathcal{T}$.
For each member $P$ of $\mathcal{T}$, let us choose a subset $S_{P}$ of $X$ having a common point to $Y$ such that $S_{P}$ consists of at most $d+2$ points and 
it can not be separated along $\widehat{P}$.
By Lemma~\ref{lem:kirch}, there is such a set.
Then the set $Y\cup (\bigcup_{P\in\mathcal{T}}S_{P})$, which consists of at most $(d+1)\cdot \eta_{d}(k)+k$ points, can not be partitioned by hyperplanes along the colors.
\end{proof}

\section{Conclusion}\label{sec:conclusion}
We introduced the notion of partitions of colored points and proved a colorful Kirchgerber theorem with respect to this notion.
It is remained as a future work to improve the number $(d+1)\cdot\eta_{d}(k)+k$ in the statement of the main theorem.

\section*{Acknowledgments}
The author is grateful to Professor Hideki Tsuiki for his comments.

\end{document}